\newtheorem{remark}{Remark}[section] 
\definecolor{lightcyan}{rgb}{0.88,1,1}
\definecolor{cadetgrey}{rgb}{0.57, 0.64, 0.69}
\definecolor{airforceblue}{rgb}{0.36, 0.64, 0.66}
\newrobustcmd*{\mycircle}[1]{\tikz{\filldraw[draw=#1,fill=#1] (0,0) circle [radius=0.1cm];}}
\newrobustcmd*{\mytriangle}[1]{\tikz{\filldraw[draw=#1,fill=#1] (0,0) --
(0.2cm,0) -- (0.1cm,0.2cm);}}
\colorlet{pastelgreen}{green!}
\colorlet{pastelred}{red!50!}
\colorlet{pastelyellow}{yellow!50!}
\definecolor{calpolypomonagreen}{rgb}{0.12, 0.3, 0.17}
\newcommand{\myparagraph}[1]{\smallskip\textbf{\textit{#1}:}\hspace{3pt}}
\begin{document}

%%
%% The "title" command has an optional parameter,
%% allowing the author to define a "short title" to be used in page headers.
\title[Integer Factorization: Another perspective]{Integer Factorization: Another perspective}

%%%%%%%%%%%%%%%% Authors' Info %%%%%%%%%%%%%%%%%

\author{
    {\rm Gilda Rech BANSIMBA$^{*}$ \hspace{1em} Regis Freguin BABINDAMANA$^{\dagger}$\\ %\quad {\rm Basile Guy R. BOSSOTO}$^{\dagger}$ \quad 
     % $^{*}, \ \dagger \ ${Marien-NGOUABI University} \hspace{2em}\\
     $^{*}$bansimbagilda@gmail.com \hspace{1em} $^\dagger$regis.babindamana@umng.cg
}}

\renewcommand{\shortauthors}{Gilda. R. Bansimba. Regis. F. Babindamana}%, R. F. Babindamana, and B. G. R. Bossoto}
%%
%% The abstract is a short summary of the work to be presented in the 
%% article.
\begin{abstract}
  Integer factorization is a fundamental problem in algorithmic number theory and computer science. 
This problem is considered as a one way or trapdoor function in cryptography, mainly in the Rivest, Shamir and Adleman (RSA) cryptosystem where it constitutes the foundation of the key exchange protocol and digital signature.
To date, from elementary trial division to sophisticated methods like the General Number Field Sieve, no known algorithm can break the problem in polynomial time, while its proved that Shor's algorithm could on a quantum computer. That said, these methods are limited in term of efficiency and their success strongly linked to computational resources, particularly with the shor's algorithm using quantum circuits that entirely depends on a quantum computer.\\
In this paper, we start by recalling slightly some factorization facts and then approach the problem under completely different angles. Firstly, we take the problem from the ring $\displaystyle\left(\mathbb{Z}, \text{+}, \cdot\right)$ to the Lebesgue space $\mathcal{L}^{1}\left(X\right)$ where $X$ can be $\mathbb{Q}$ or any given interval setting. From this first perspective, integer factorization becomes equivalent to finding the perimeter of a rectangle whose area is known. In this case, it is equivalent to either finding bounds of integrals or finding primitives for some given bounds. 
Secondly, we take the problem from the ring $\displaystyle\left(\mathbb{Z}, \text{+}, \cdot\right) $ to the ring of matrices $\left( M_{2}\text{(}\mathbb{Z}\text{)}, \ \text{+} \ \cdot\right)$ and show that this problem is equivalent to matrix decomposition, and therefore present some possible computing algorithms, particularly using Gröbner basis and through matrix diagonalization.
Finally, we address the problem depending on algebraic forms of factors and show that this problem is equivalent to finding small roots of a bivariate polynomial through coppersmith's method.

The aim of this study is to propose innovative methodological approaches to reformulate this problem, thereby offering new perspectives. 
%We believe the different point of views we introduce could help advancing and designing new methods.

\end{abstract}

%%
%% Keywords. The author(s) should pick words that accurately describe
%% the work being presented. Separate the keywords with commas.
%\keywords{todo}

\maketitle

\section{Introduction}
Factoring integers is one of the most important problems in cryptology and number theory. In 1801, in "Disquisitiones Arithmeticae" \cite{gauss}, Gauss identified integer factorization and the primality testing as the two fundamental problems of arithmetic. Indeed, he went on to say: "The dignity of science itself seems to require that all possible means should be explored to solve a so
elegant and famous problem." (Translation of the original texts). 
These two main problems are at the heart of cryptographic protocols, to ensure the confidentiality, authenticity, integrity and non-repudiation of information. This is the case of the Diffie-Hellman Elliptic Curve (ECDH) protocol implemented in TLS, for example, where the primality of the cardinal of the associated group is an important security feature to avoid attacks like MOV; the Elliptic Curve Digital Signature (ECDSA) protocol, or the RSA cryptosystem, whose security essentially relies on integer factorization. 

%Though, many approaches have been adopted in the study of these problems. Like elliptic curves introduced into cryptography independently by Miller \cite{mi} and Koblitz \cite{ko} in 1985, from which, several applications have been proposed, including primality testing, integer factorization whose main motivation is the cryptanalysis of the Rivest Shamir and Adleman (RSA) cryptosystem \cite{rsa} and key exchange protocols. Similarly, genus 0 curves were first studied in the literature by Pell \cite{pell} and Fermat \cite{fer1} with the same perspectives.

Over the 20th century, a number of factoring algorithms were proposed, the most important of which are the Quadratic Sieve (QS) proposed by Carl Pomerance \cite{qs} in 1985 and the General Number Field Sieve (GNFS) proposed by the same author 9 years later \cite{nfs} in 1994, based on the Factorisation method proposed by Pierre de Fermat a century earlier \cite{fer1}, \cite{fer2}, in addition the Elliptic Curve Method (ECM) proposed by Hendrik Lenstra \cite{ecm} in 1987. These methods have also seen significant improvements during this century (see \cite{dv}, \cite{mont}, \cite{dt2}, \cite{dt8}, \cite{dt6}, \cite{improv1}, \cite{improv2}).
Since the beginning of the 21st century, several other improvements to these methods have been proposed, particularly for the ECM method, both in terms of parametrizations and addition formulas for different coordinate systems ( \cite{ed}, \cite{eced}, \cite{dt1}, \cite{dt3}, \cite{dt4}, \cite{dt5}, \cite{dt6}, \cite{gmp}, \cite{mpfq}). 

In practice however, these algorithms still present feasibility limits for fairly large parameters. This is the case, for example, of the record set in 2019 by Emmanuel Thomé, Paul Zimmermann, Fabrice Boudot, Pierrick Gaudry, Aurore Guillevic and Nadia Heninger on a 795-bit RSA module (RSA-240) using an optimized implementation of the state-of-the-art method, the number-field sieve method in the CADO-FS software \cite{kdo} with a computation time of around 1000 physical core-years on Intel Xeon Gold 6130 CPUs \cite{kdo1}, or even the latest feat on an 829-bit RSA module (RSA-250) achieved in February 2020 by the same authors using CADO-NFS with a computation time of 2,700 core-years on 
Intel Xeon Gold 6130 CPUs, distributed across the computing resources of the INRIA and CNRS laboratories and their partner institutions; the EXPLOR computing center of the 
Université de Laurraine in Nancy (France); the PRACE research infrastructure using resources at the Juelich supercomputing center in Germany, as well as cisco equipment at UCSD. 

All these requirements in terms of hardware resources and very high computing power demonstrate how important it is to explore new directions in order to propose new approaches that could lead to efficiently solving such an important problem in cryptology and number theory.

In this paper we address the problem in a another direction with a whole new point of view. That is, primitives and integral calculus on one side,  matrix decomposition with application of gröbner basis on the other side and finally polynomial small roots computation through coopersmith method.

\section{Background}
\label{sec:background}

Factorization methods can be divided into 2 main categories: special purpose factorization methods and general purpose factorization methods.\\
We hereby consider $n$ to be a semiprime of size $b$ bits, $p$ and $q$ factors of $n$ with $\min \left\lbrace  p, q \right\rbrace =p$.
\subsection{Special purpose factorization methods}
Special purpose factorization methods running time depends on the properties of the number to be factored, mainly its factors such as the smallest prime factor. Among these algorithms, we have:
\begin{enumerate}
\item Pollard's rho algorithm \\
The Pollard's rho algorithm is an algorithm for integer factorization invented by John Pollard in 1975. 
It takes as its input $n$ the integer to be factored and considers the polynomial $g\left(x\right)= \left(x^2 \text{-}1 \right) \mod n $ or more commonly $g\left(x\right)= \left(x^2 \text{+}1 \right) \mod n $ and outputs a non trivial factor of $n$, or it fails.\\
This method has an expected running time proportional to the square root of the smallest prime factor of the composite number being factorized, that's to say $\displaystyle O\left(\sqrt{p}\right)$.\\

\item Fermat's factorization Method \ \\
The Fermat's Factorization Method factors an odd integer 
$n$ by expressing it as the difference of two squares:\\ $n=a^{2}\text{-}b^{2}=\left(a\text{-}b\right)\left(a\text{+}b\right)$.\\ Its time complexity in the worse case is $\displaystyle O\left(\sqrt[4]{n} \right)$. \\
\item Euler's factorization Method \\
The Euler's Factorization Method is based on representing a number $n$ as the sum of two squares in two different ways, leading to its factorization. Specifically, it uses the identity $n=a^2\text{+}b^2=c^2\text{+}d^2$.\\
Its time complexity in the worst case is $O\left(\sqrt{p} \right)$. \\
\item Lenstra's elliptic curve method \ \\
The Elliptic Curve factorization Method (ECM) that was introduced by H. Lenstra \cite{ecm} and conjectured to be in\\ $\displaystyle \mathsf{O}\left(\exp^{(\sqrt{2}\text{+}\mathsf{O}\text{(}1\text{)})\sqrt{ \log p \log\log p}}\right)$ (where $p$ is the smallest factor of the composite module $n$)
is an improvement of the pollard $p\text{-}1$ method (see \cite{pol}) where the multiplicative group $\mathbb{Z}\text{/}n\mathbb{Z}$ is replaced by $E\text{(}\mathbb{Z}\text{/}n\mathbb{Z}\text{)}$, an elliptic curve defined over the residue ring $\mathbb{Z}\text{/}n\mathbb{Z}$, where $n$ is the number to factor.\\ 
From short Weierstrass to the use of Montgomery curves \cite{mont}. Since the introduction of Edwards Curve \cite{eced} and their use  in the ECM \cite{ed} thanks to their large torsion subgroups by Mazur Theorem \cite{maz}, Edwards Curves have replaced Montgomery curves in lattest implementation ECM-MPFQ \cite{mpfq} over the GMP-ECM \cite{gmp}. Results in \cite{dt1, dt2, dt3, dt4, dt5, dt6, dt7, dt8}, show significant improvements not only at the scalar multiplication level with fewer number of Inversions, Multiplications and Squarings but also at parametrization level.\\
An elliptic curve over a field $\mathbb{K}$ is an algebraic curve given by a non singular equation of the form\\
 $y^{2}\text{+}a_{1}xy\text{+}a_{3}y=x^{3}\text{+}a_{2}x^{2}\text{+}a_{4}x\text{+}a_{6}, a_{i}\in \mathbb{K}$ called generalized weierstrass equation form. In the projective space $\mathbb{P}^{2}\text{(}\mathbb{K}\text{)}=\displaystyle \frac{\mathbb{K}^{3}\setminus \lbrace (0, 0, 0)\rbrace}{\sim}$ through the map $ \left(x, y\right)\longmapsto \left(X\text{/}Z, Y\text{/}Z\right)$, the equation becomes\\ $Y^{2}Z\text{+}a_{1}XYZ\text{+}a_{3}YZ^{2}=X^{3}\text{+}a_{2}X^{2}Z\text{+}a_{4}XZ^{2}\text{+}a_{6}Z^{3}$.\\
More generally, there exist isomorphisms that change the generalized form to reduced forms dependently on the characteristic of the field in which the curve is defined. For example 
\begin{eqnarray}
\left\lbrace
\begin{array}{ll}
y^{2}=x^{3}\text{+}ax\text{+}b \ if \ char(\mathbb{K})>3 \\
y^{2}=x^{3}\text{+}ax^{2}\text{+}bx\text{+}c \ if \ char(\mathbb{K})=3 \\
y^{2}\text{+}cy=x^{3}\text{+}ax\text{+}b \ if \ char(\mathbb{K})=2 \\
y^{2}\text{+}xy=x^{3}\text{+}ax^{2}\text{+}b \ if \ char(\mathbb{K})=2 \\
\end{array} \right.
\end{eqnarray}
$E\text{(}\mathbb{F}_{p}\text{)}=\lbrace (x, y)\in \mathbb{F}_{p}^{2} \ \text{:} \ y^{2}\text{+}a_{1}xy\text{+}a_{3}y=x^{3}\text{+}a_{2}x^{2}\text{+}a_{4}x\text{+}a_{6},a_{i}\in \mathbb{F}_{p} \rbrace \cup \lbrace \mathcal{O} \rbrace$, has an abelian group structure with the additive law where $\mathcal{O}=(0:1:0)$ denotes the point at infinity, corresponding to the case $Z=0$ in the projective space. \\ 
Considering a field with characteristic $p>3$ and two points $P=\text{(}x_{p}, y_{p}\text{)}, Q=\text{(}x_{q}, y_{q}\text{)} \in E_{a, b}\text{(}\mathbb{F}_{p}\text{)}=\lbrace (x, y)\in \mathbb{F}_{p}^{2} \ / \ y^{2}=x^{3}\text{+}ax\text{+}b, \ a, b\in \mathbb{F}_{p} \rbrace$, using the chord and tangent rule with the theorem of Bezout \cite{bezou}, the addition law on the curve is defined by
\begin{eqnarray*}
if \ P \neq Q:
\left\lbrace
\begin{array}{ll}
x_{P\text{+}Q}=\left( \frac{y_{P}-y_{Q}}{x_{P}-x_{Q}}\right) ^{2}- x_{P} -x_{Q}\\
y_{P\text{+}Q}=-y_{P}+\frac{y_{P}-y_{Q}}{x_{P}-x_{Q}}\left(x_{P} -x_{P+Q}\right)\\
\end{array} \right. ;
\end{eqnarray*}

\begin{eqnarray*}
 if \ P=Q:
\left\lbrace
\begin{array}{ll}
x_{P\text{+}P}=x_{2P}=\left(\frac{3x_{P}^{2}\text{+}a}{2y_{P}}\right)^{2}-2 x_{P}\\
y_{P\text{+}P}=y_{2P}=-y_{P}\text{+}\frac{3x_{P}^{2}\text{+}a}{2y_{P}}\left(x_{P} -x_{2P}\right)\\
\end{array} \right.
\end{eqnarray*}
Setting $\alpha =\frac{y_{P}-y_{Q}}{x_{P}-x_{Q}}$ or $\alpha = \frac{dy}{dx}\left(P\right)= \frac{3x_{P}^{2}\text{+}a}{2y_{P}}$ respectively while adding or doubling on the curve, we clearly see that these computations involve not only a certain number of Multiplications (M) and Squarings (S), but also Inversions (I).
Since computations are done in $\mathbb{Z}\text{/}n\mathbb{Z}$ with $n=p\times q$, $p, q$ primes, denominators $x_{P}-x_{Q}$ and $2y_{P}$ of $\alpha$ are invertible if and only if $\gcd(x_{P}-x_{Q}, n)=1$ when adding, or $\gcd(2y_{P}, n)=1$ when doubling. If not equal to $1$ and $\neq n$ then a proper divisor or a non-trivial factor of $n$ is found. This is the all idea of ECM, using the isomorphism between $E\left(\mathbb{Z}\text{/}n\mathbb{Z}\right)$ and $E\left(\mathbb{F}_{p}\right) \times E\left( \mathbb{F}_{q}\right)$. \\
Given a bound $B$, a point $P\in E\left(\mathbb{Z}\text{/}n\mathbb{Z}\right)$ and a $B-$smooth scalar $k$, the method consists on computing $kP$ until finding $kP=\mathcal{O}$ using double and add method or NAF (see \cite{speed1, speed2}) to speed-up computations, (see \cite{dt1, dt3, dt4, dt5} for improvements, and parametrizations yielding fewer number of M, S, and I)\\
For performances purpose, large torsion subgroup elliptic curves over rationals present much more advantages. Some families and parametrizations of curves present better performance compared to the conventional short weiestrass equation in characteristic greater than $3$. By Mazur theorem, every elliptic curve has a group torsion subgroup isomorphic to either one of $E_{tors}\text{(}\mathbb{Q}\text{)} \cong \mathbb{Z}\text{/}n_{i}\mathbb{Z}$,  $ n_{i}=1, 2, 3, 4, 5, 6, 7, 8, 9, 10$ or $12$ or $E_{tors}\text{(}\mathbb{Q}\text{)} \cong \mathbb{Z}\text{/}2\mathbb{Z}\times \mathbb{Z}\text{/}2n_{j}\mathbb{Z}$, $n_{j}=1, 2, 3$ or $4$.
So curves whose points have large torsion subgroup over $\mathbb{Q}$ are preferable. \\
For example over a field $\mathbb{K}$, the Montgomery curve given by $M_{A, B}: \  By^{2}=x^{3}\text{+}Ax^{2}\text{+}x$ where $B\text{(}A^{2}\text{-}4\text{)}\neq 0$ used in GMP-ECM implementation (see \cite{gmp}) with the law $\forall \ P=\text{(}x_{p}, y_{p}\text{)}, \ Q=\text{(}x_{q}, y_{q}\text{)} \in \lbrace \text{(}x, y\text{)}\in \mathbb{K}^2 \ / \ By^{2}=x^{3}\text{+}Ax^{2}\text{+}x \rbrace$,\\ $P\text{+}Q=\left(\frac{B\left(x_{q}y_{p}\text{-}x_{p}y_{q}\right)^2}{x_{p}x_{q}\text{(}x_{q}\text{-}x_{p}\text{)}^2}, \ \frac{\text{(}2x_{p}\text{+}x_{q}\text{+}A\text{)}\text{(}y_{q}\text{-}y_{p}\text{)}}{x_{q}\text{-}x_{p}}\text{-}\frac{B\text{(}y_{q}\text{-}y_{p}\text{)}^3}{(x_{q}\text{-}x_{p})^3}\text{-}y_{p} \right)$ and equivalent to the Weierstrass reduced form through the map $\varphi: \ M_{A, B} \longrightarrow \ E_{a, b}$, $\text{(}x, y\text{)}\longmapsto \text{(}x^{'}, y^{'}\text{)}=\text{(}\frac{x}{B}\text{+}\frac{A}{3B}, \frac{y}{B}\text{)}$ where $a=\frac{3\text{-}A^2}{3B^2}$, $b=\frac{2A^3\text{-}9A}{27B^3}$, and more recently Edwards Curves given by $x^{2}\text{+}y^{2}=1\text{+}dx^{2}y^{2}$ with $ d\setminus \lbrace 0, 1 \rbrace$ and generally twisted edwards curve given by $E_{a, d}: \ ax^{2}\text{+}y^{2}=1\text{+}dx^{2}y^{2}$, with $a^{5}\neq a, d\setminus \lbrace 0, 1 \rbrace$ used in the lattest implementation ECM-MPFQ (see \cite{mpfq}) present better performances for ECM. The Edward's addition law is defined by $\forall \ P=\text{(}x_{p}, y_{p}\text{)}, \ Q=\text{(}x_{q}, y_{q}\text{)} \in \lbrace \text{(}x, y\text{)}\in \mathbb{K}^2 \ \text{:} \ ax^{2}\text{+}y^{2}=1\text{+}dx^{2}y^{2} \rbrace$,\\ $P\text{+}Q=\left(\frac{1}{1\text{+}dx_{p}x_{q}y_{p}y_{q}}(x_{p}y_{q}\text{+}x_{q}y_{p}), \ \frac{1}{1\text{-}dx_{p}x_{q}y_{p}y_{q}}\text{(}y_{p}y_{q}\text{-}ax_{p}x_{q}\text{)} \right)$\\
which is equivalent to\\
$P\text{+}Q= \left( \frac{1}{ax_{p}x_{q}\text{+}y_{p}y_{q}}\text{(}x_{p}y_{p}\text{+}x_{q}y_{q}\text{)}, \ \frac{1}{x_{p}y_{q}\text{-}y_{p}x_{q}}\text{(}x_{p}y_{p}\text{-}x_{q}y_{q}\text{)}\right)$ (see \cite{edt}) where point doubling is not well defined and any Edward's equation form is birationally equivalent to the Montgomery form through the isomorphism $\Omega: \ E_{a,d}\longrightarrow \ M_{A,B}$,      $\text{(}x, y\text{)} \longmapsto \ \text{(}x^{'}, y^{'}\text{)}=\text{(} \frac{1\text{+}y}{1\text{-}y}, \ \frac{1\text{+}y}{x\text{(}1\text{-}y\text{)}}\text{)}$ where $A=\frac{2\text{(}a\text{+}d\text{)}}{a\text{-}d}$, $B=\frac{4}{a\text{-}d}$ and $ \Omega ^{-1}: \ M_{A, B} \longrightarrow \ E_{a, d}$, $\text{(}x^{'}, y^{'}\text{)} \longmapsto \ \text{(}\frac{x^{'}}{y^{'}},  \frac{x^{'}\text{-}1}{x^{'}\text{+}1}{\text{)}}$ where $a=\frac{A\text{+}2}{B}, \ d= \frac{A\text{-}2}{B}$.\\

\item Special number field sieve \\
The Special Number Field Sieve (SNFS) is one of the fastest known algorithms for factoring large integers of a special form, such as numbers of the form $a^n \pm b$. It uses algebraic number fields and a lattice reduction algorithm to find factors efficiently.\\
Its time complexity is expected to be\\ $O\left(e^{\left(1\text{+}O\text{(}1\text{)}\right)\left(\frac{32}{9}\log n\right)^{1\text{/}3}\left(\log\log n\right)^{2\text{/}3}}\right)$. \ \\

\end{enumerate}
\subsection{General purpose factorization methods}
This category of factorization methods has a running time entirely depending on the size of the integer to be factored.
Among, we have:
\begin{enumerate}
\item Trial division \\
The trial division factorization method consists on dividing the number $n$ by successive integers starting from the smallest prime ($2$) until the square root of $n$. If it is divisible by one of these integers, that integer is a factor of $n$.\\
Its worst case time complexity is $\displaystyle O\left(\sqrt{n}\right)$.\\
\item Dixon's factorization method \ \\
The Dixon's Factorization Method is a probabilistic algorithm that finds factors of a composite number $n$ by searching for numbers $x$ and $y$ such that $x^2 \equiv y^2 \mod n$. It uses a set of smooth numbers to find these congruences.\\
In the worst case, it has a time complexity of \\ $O\left(e^{2\sqrt{2}\sqrt{\log n \log\log n}}\right)=L_{n}\left[\frac{1}{2}, \ 2\sqrt{2} \right]$. \ \\

\item Continued fraction factorization method \ \\
The continued fraction factorization method (CFRAC) is a general purpose integer factorization algorithm, introduced by D. H. Lehmer and R. E. Powers in 1931 and developped by Michael A. Morrison and John Brillhart in 1975.\\
This method is based on Dixon's factorization method. It uses convergents in the regular continued fraction expansion of $\sqrt{kn}$ for any positive integer $k$.\\
It has a complexity of $\displaystyle\mathsf{O}\left(e^{\sqrt{2\log n \log \log n}} \right)=\displaystyle\mathsf{L}_{n}\left[1/2, \ \sqrt{2}\right]$. \\

\item Quadratic sieve \ \\
The Quadratic Sieve (QS) is a factorization algorithm that finds factors of a composite number $n$ by searching for numbers $x$ and $y$ such that $x^2 \equiv y^2 \mod n$. It uses a large set of smooth numbers to build a congruence of squares.\\
Its expected time complexity is $O\left(e^{\left(1\text{+}O\text{(}1\text{)}\right)\sqrt{\ln n \ln \ln n}} \right)$. \ \\

\item General number field sieve \ \\
The General Number Field Sieve (GNFS) is the most efficient classical algorithm for factoring large integers, particularly those with more than 100 digits. It uses algebraic number fields and polynomial selection to find relations that lead to a factorization of the target composite number $n$.\\
Its expected time complexity is\\ $O\left(e^{\left( \left(\left(\frac{8}{3}\right)^{2\text{/}3}\text{+} O\text{(}1 \text{)}\right)\left(\log n\right)^{1\text{/}3}\left(\log\log n\right)^{2\text{/}3} \right)} \right)=L_{n}\left[\frac{1}{3}, \ \left(64\text{/}9\right)^{1\text{/}3} \right]$.

\end{enumerate}

%\myparagraph{Measuring DNS Manipulation}

\myparagraph{Summary of some of the main factorization Methods} \ \\
In the same setting as above, we consider $n$ a semiprime of size $b$ bits, $p$ and $q$ its factors with $\min \left\lbrace  p, q \right\rbrace =p$. We then have the following summary table
\begin{scriptsize}
    %%%%%% start
\begin{table}[ht!]
  \caption{Integer factorization Methods by category}
  \centering 
  \begin{threeparttable}
    \begin{tabular}{ccccccc}
     \toprule
    Title  & \makecell{Special\\ purpose} & \makecell{General\\ Purpose} & Quantum & Complexity\\
    \cmidrule(l r ){1-5}
    %\midrule
     %\midrule\midrule
     Trial Division &   \ding{55} & \ding{51} & \ding{55} & $O\left(\displaystyle\sqrt{n}\right)$ \\
    % \midrule
    \cmidrule(l  r ){1-5}
     Pollard's Rho & \ding{51} & \ding{55} & \ding{55} & $O\left(\sqrt{p}\right)$ \\ 
    \cmidrule(l r ){1-5}
    %\midrule
     Fermat's Method & \ding{51} & \ding{55} & \ding{55} & $O\left(n^{1\text{/}4}\right)$ \\ 
     %\midrule
    \cmidrule(l r ){1-5}
    Euler's Method   & \ding{51}   & \ding{55} & \ding{55} & $O\left(\sqrt{p}\right)$  \\
    %\midrule
     \cmidrule(l r ){1-5}
    Lenstra's Method  & \ding{51}   & \ding{55} & \ding{55} & $O\left(e^{\left(\sqrt{2}\text{+}O\text{(}1\text{)}\right)\sqrt{ \log p \log\log p}}\right)$  \\
     \cmidrule(l r ){1-5}
     %\midrule
     Special Number field sieve  & \ding{51}   & \ding{55} & \ding{55} & $O\left(e^{\left(1\text{+}O\text{(}1\text{)}\right)\left(\frac{32}{9}\log n\right)^{1\text{/}3}\left(\log\log n\right)^{2\text{/}3}}\right)$  \\
     \cmidrule(l r ){1-5}
     %\midrule
    Shor's algorithm  & \ding{55}   & \ding{51}  & \ding{51} & $O\left(b^{3}\right)$  \\
     \cmidrule(l r ){1-5}
     %\midrule
    Dixon's Method  & \ding{55}   & \ding{51}  & \ding{55} & $O\left(e^{2\sqrt{2}\sqrt{\log n \log\log n}}\right)$  \\
    %\midrule
     \cmidrule(l r ){1-5}
    Quadratic Sieve  & \ding{55}   & \ding{51} & \ding{55} & $O\left(e^{\left(1\text{+}O\text{(}1\text{)}\right)\sqrt{\ln n \ln \ln n}} \right)$  \\
     \cmidrule(l r ){1-5}
     %\midrule
    General Number Field Sieve  & \ding{55}   & \ding{51} & \ding{55} & $O\left(e^{\left( \left(\left(\frac{8}{3}\right)^{2\text{/}3}\text{+} O\text{(}1 \text{)}\right)\left(\log n\right)^{1\text{/}3}\left(\log\log n\right)^{2\text{/}3} \right)} \right)$  \\
    %\midrule\midrule
    \bottomrule
    \end{tabular}
    \begin{tablenotes}
\item[*] Table of integer factorization methods by category.
\end{tablenotes}
\end{threeparttable}
  \end{table}
  %%%%%%%%en
\end{scriptsize}
\section{Contributions}
\label{sec:system}
The most difficult case in integer factorization is to factor a semi prime whose factors are big primes.\\ For this reason, in this section we consider the modulus $n$ to be a semiprime. %and we address this problem with different point of views. Firstly the rectangle area or integral point of view, secondly the matrix decomposition point of view and finally the modulus algebraic form point of view or polynomial small roots computation. 

\subsection{Rectangle point of view (Rpv)}

First and foremost, integer factorization can be reformulated as follows: 
the factorization of a semiprime $n=pq$ is equivalent to finding a rectangle's length and width whose area is known as $n$.\\
Equivalently, it is to find a perimeter of a rectangle whose area is known.\\
This is proved by the fact this case the perimeter $p=2(p+q)$ and area $n=pq$.
%\begin{center}
%\includegraphics[width=5cm, height=3cm]{rec.png}
%\end{center}

Then recovering $p$ and $q$ becomes equivalent to either finding bounds $p$ and $q$ such that $$\displaystyle\int_{0}^{p} \int_{0}^{q} \ d_{x}d_{y}=n$$, either finding primitives $\gamma_{1}(x)$ and $\gamma_{2}(y)$ such that for given bounds $\left(\alpha_{0},\alpha_{1}, \beta_{0}, \beta_{1}\right)\in \mathbb{Z}^{4}$, we have $$\displaystyle\int_{\alpha_{0}}^{\alpha_{1}} \int_{\beta_{0}}^{\beta_{1}}\gamma_{1}(x)\gamma_{2}(y) \ d_{x}d_{y}=n$$.
\begin{lemma} \label{lem1}\ \\ \rm{
For any prime number $p$ greater than 3, there exists a positive integer $x$ such that $p=y(x)$ is solution of the first order ordinary differential equation with Dirichlet condition $$ y^{'}=6, \ \text{with \ \ }  y(0)=\pm 1$$
}
\end{lemma}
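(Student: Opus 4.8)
The plan is to first solve the differential equation explicitly, then translate the statement into a congruence and prove the classical fact that every prime exceeding $3$ is congruent to $\pm 1$ modulo $6$.

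First I would integrate the equation $y' = 6$ to obtain the general solution $y(x) = 6x + C$. The Dirichlet condition $y(0) = \pm 1$ fixes the constant: the choice $y(0) = 1$ gives $y(x) = 6x + 1$, while $y(0) = -1$ gives $y(x) = 6x - 1$. Thus the two solutions associated to the two signs are exactly the affine maps $x \mapsto 6x \pm 1$, and the lemma reduces to showing that for every prime $p > 3$ there is a positive integer $x$ with $p = 6x + 1$ or $p = 6x - 1$; equivalently, $p \equiv \pm 1 \pmod{6}$.

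Second, I would establish this congruence by examining residues modulo $6$. Writing $p = 6q + r$ with $r \in \{0,1,2,3,4,5\}$ by Euclidean division, I would rule out four of the six residues: $r = 0$ forces $6 \mid p$; $r = 2$ and $r = 4$ force $2 \mid p$ with $p > 2$; and $r = 3$ forces $3 \mid p$ with $p > 3$. In each case $p$ would be composite, contradicting primality. Only $r = 1$ and $r = 5$ survive. When $r = 1$ we set $x = q$ and obtain $p = 6x + 1$ (the case $y(0) = 1$); when $r = 5$ we write $p = 6(q+1) - 1$ and set $x = q + 1$ (the case $y(0) = -1$). In both cases the hypothesis $p > 3$ guarantees $q \geq 1$, hence $x$ is a positive integer, completing the argument.

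The content here is entirely elementary, so there is no serious obstacle; the only care required is bookkeeping. One must correctly pair each surviving residue class with the matching sign of the Dirichlet condition, and verify that the hypothesis $p > 3$ is precisely what forces the cofactors in the excluded cases to be proper (so that primality is genuinely violated) and what guarantees $x \geq 1$ rather than $x = 0$. In essence the differential-equation formulation is a restatement of the well-known classification of primes modulo $6$, dressed up so that the two admissible residue classes appear as the two solution curves selected by $y(0) = \pm 1$.
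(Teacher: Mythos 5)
Your proposal is correct and takes essentially the same route as the paper: integrate $y'=6$ to obtain $y(x)=6x+c$ with $c=\pm 1$, then reduce the lemma to the classical fact that every prime $p>3$ lies in one of the residue classes $\pm 1 \pmod 6$. The only difference is completeness---where the paper dismisses that arithmetic step as a ``basic observation from elementary number theory,'' you actually carry out the Euclidean-division case analysis on $r\in\{0,1,2,3,4,5\}$ and verify that $p>3$ forces $x\geq 1$, so your write-up fills in details the paper leaves implicit.
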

\begin{proof} \ \\ \rm{
$$y^{'}=\frac{dy}{dx}=6 \ ,$$\\
then $\int dy=\int 6dx$. Thus $y=6x+c$ with $c=\pm 1$. Hence \\ the prime number $p=y(x)=6x \pm 1$. From a basic observation from elementary number theory using properties of integers and modular arithmetic, any prime number can be represented as such.
}
\end{proof}
\begin{example} \ \\ \rm{
Consider the following primes:
\begin{itemize}
\item[•] the 100 bits prime:\\
{\scriptsize $p=570303428823043591555786898897=y(x)=6x-1$},\\ with {\scriptsize $x=95050571470507265259297816483$}
\item[•] the $233$ bits prime:\\ 
{\scriptsize $p=6833702715893496540959299291382359232559264092523368475576074466071807=y(x)=6x-1$} with {\scriptsize $x=1138950452648916090159883215230393205426544015420561412596012411011968$}
\item[•] the $333$ bits prime:\\
{\scriptsize $p=6358337399459401417678277930704848782199223004051687909162273047126596686480950\\628594226066122559163=y(x)=6x+1$} with {\scriptsize $x=1059722899909900236279712988450808130\\366537167341947984860378841187766114413491771432371011020426527$}
\item[•] the $1000$ bits prime:\\
{\scriptsize $p=658089189840301730450496429519238792660987613697354796644846033244621236257652046\\021740484921759504122555940691871306258635428762907108947956854239712936130938554245\\542843969795828244216429256228793034041405027874070693186794739605844025450561774421\\858331525104830589953173100274244230949509794254643=y(x)=6x-1$} with {\scriptsize $x=109681531640050288408416071586539798776831268949559132774141005540770206042942007670\\290080820293250687092656781978551043105904793817851491326142373285489355156425707590\\473994965971374036071542704798839006900837979011782197799123267640670908426962403643\\055254184138431658862183379040705158251632375774$}
\end{itemize}
}
\end{example}
Let's consider the following proposition:
\begin{proposition} \ \\ \rm{
Given a RSA modulus $n\geqslant 35$, set $\gamma_{1}(x)=6$; $\gamma_{2}(y)=6$ and $\alpha_{0}=\beta_{0}=\pm \frac{1}{6}$ then there exists $\alpha$ and $\beta$ such that $$\displaystyle\int_{\pm \frac{1}{6}}^{\alpha} \int_{\pm \frac{1}{6}}^{\beta}\gamma_{1}(x)\gamma_{2}(y) \ d_{x}d_{y}=n$$.
}
\end{proposition}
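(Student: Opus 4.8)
The plan is to collapse the double integral into a product of two one-dimensional integrals and then recognise each factor as a prime written in the $6x\pm 1$ form supplied by Lemma~\ref{lem1}. First I would use the fact that the integrand $\gamma_{1}(x)\gamma_{2}(y)=6\cdot 6$ is separable and the limits are constants, so by Fubini the iterated integral factors as
$$\int_{\alpha_{0}}^{\alpha}\int_{\beta_{0}}^{\beta}\gamma_{1}(x)\gamma_{2}(y)\,d_{x}d_{y}=\left(\int_{\alpha_{0}}^{\alpha}6\,d_{x}\right)\left(\int_{\beta_{0}}^{\beta}6\,d_{y}\right)=6(\alpha-\alpha_{0})\cdot 6(\beta-\beta_{0}).$$
Evaluating the antiderivative shows $\int_{\alpha_{0}}^{\alpha}6\,d_{x}=6\alpha-6\alpha_{0}$, so the choice $\alpha_{0}=+\tfrac{1}{6}$ produces the value $6\alpha-1$ and the choice $\alpha_{0}=-\tfrac{1}{6}$ produces $6\alpha+1$; in either case the integral equals a number of the form $6\alpha\pm 1$.

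Next I would write $n=pq$ as the product of its two prime factors. Because $n$ is an RSA modulus with $n\geqslant 35$, both $p$ and $q$ are primes strictly greater than $3$, so Lemma~\ref{lem1} applies to each and yields integers $x_{p},x_{q}$ with $p=6x_{p}\pm 1$ and $q=6x_{q}\pm 1$. The key step is then to match the boundary signs to these representations: I would pick the sign of $\alpha_{0}$ so that $\int_{\alpha_{0}}^{x_{p}}6\,d_{x}=p$, and the sign of $\beta_{0}$ so that $\int_{\beta_{0}}^{x_{q}}6\,d_{y}=q$. Setting $\alpha=x_{p}$ and $\beta=x_{q}$ then makes the two one-dimensional integrals equal to $p$ and $q$, and by the factorisation above their product is $pq=n$, which is exactly the claimed identity.

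Finally I would justify the hypothesis $n\geqslant 35$: this is the smallest product of two distinct primes both exceeding $3$ (namely $5\times 7$), which is precisely the range in which Lemma~\ref{lem1} is guaranteed to represent both factors in the $6x\pm 1$ form. The argument is essentially bookkeeping once separability is invoked, so I do not expect a genuine obstacle; the one subtlety worth stating carefully is the sign convention, since $p$ and $q$ may sit in different residue classes modulo $6$, forcing $\alpha_{0}$ and $\beta_{0}$ to carry opposite signs. Accordingly the notation $\alpha_{0}=\beta_{0}=\pm\tfrac{1}{6}$ must be read as permitting the two signs to be selected independently, not as requiring them to coincide.
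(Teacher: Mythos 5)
Your proposal is correct and takes essentially the same route as the paper: both apply Fubini to factor the double integral into $\left(6\alpha\pm 1\right)\left(6\beta\pm 1\right)$ and then invoke Lemma~\ref{lem1} to pick $\alpha$, $\beta$ and the boundary signs so that the two factors equal $p$ and $q$ with $pq=n$. Your additional remarks---that the signs in $\alpha_{0}=\beta_{0}=\pm\frac{1}{6}$ must be chosen independently when $p$ and $q$ lie in different residue classes modulo $6$, and the explanation of the bound $n\geqslant 35$---merely make explicit what the paper's $\pm$ notation leaves implicit.
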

\begin{proof} \ \\ \rm{
Using the Fubini-Tonelli \cite{fubini} theorem,
\begin{align*}
\displaystyle\int_{\pm \frac{1}{6}}^{\alpha} \int_{\pm \frac{1}{6}}^{\beta}\gamma_{1}(x)\gamma_{2}(y) \ d_{x}d_{y}&=\displaystyle\int_{\pm \frac{1}{6}}^{\alpha} \gamma_{1}(x)\ d_{x}\int_{\pm \frac{1}{6}}^{\beta}\gamma_{2}(y)d_{y}\\
&=\displaystyle\int_{\pm \frac{1}{6}}^{\alpha} (6)\ d_{x}\int_{\pm \frac{1}{6}}^{\beta}(6)d_{y}\\
&=\displaystyle\left[6x+ C_{1} \right]_{\pm \frac{1}{6}}^{\alpha}\displaystyle\left[6y+ C_{2} \right]_{\pm \frac{1}{6}}^{\beta}\\
&=\displaystyle\left(6\alpha\pm 1 \right) \left(6\beta\pm 1 \right) \\
&=y(\alpha)y(\beta) \text{, from lemma \ref{lem1}}\\
&=pq\\
&=n
\end{align*}
}
\end{proof}

%%%%%%%%%%%%%%%%%%%%%%%%%%%%%%%%%%%%%%%%%%%%%%%%%%%%%%%%%%%%%%%%
%\begin{proposition} \ \\ \rm{
%Let $n\geqslant 35$ be a RSA modulus, set $\gamma_{1}(x)=3x^{2}-(1-2a_{1})x$, $a_{1}\in \mathbb{Z}_{+}$; $\gamma_{2}(y)=3y^{2}-(1-2a_{2})y$, $a_{2}\in \mathbb{Z}_{+}$. Then there exist intervals $\displaystyle\left[\alpha_{0}, \alpha_{1}\right]=\left[a_{1}-1, \ a_{1}\right]$ and $\displaystyle\left[\beta_{0}, \beta_{1}\right]=\left[a_{2}-1, \ a_{2}\right]$ such that %$$\displaystyle\int_{\alpha_{0}}^{\alpha_{1}} \int_{\beta_{0}}^{\beta_{1}}\gamma_{1}(x)\gamma_{2}(y) \ d_{x}d_{y}=n$$.
%}
%\end{proposition}
%\begin{proof} \ \rm{
%\begin{align*}
%&\displaystyle\int_{\alpha_{0}}^{\alpha_{1}} \int_{\beta_{0}}^{\beta_{1}}\gamma_{1}(x)\gamma_{2}(y) \ d_{x}d_{y}\\
%&=\displaystyle\int_{a_{1-1}}^{a_{1}} \int_{a_{2}-1}^{a_{2}}\left(3x^{2}-(1-2a_{1})x\right)\left(3y^{2}-(1-2a_{2})y \right) \ d_{x}d_{y}\\
%&=\displaystyle\int_{a_{1}-1}^{a_{1}}\left(3x^{2}-(1-2a_{1})x \right)d_{x}\int_{a_{2}-1}^{a_{2}}\left(3y^{2}-(1-2a_{2})y \right) \ d_{y}\\
%&=\displaystyle\left[x^{3}-\frac{1-2a_{1}}{2}x^{2}\right]_{a_{1}-1}^{a_{1}}\displaystyle\left[y^{3}-\frac{1-2a_{2}}{2}y^{2}\right]_{a_{2}-1}^{a_{2}}\\
%&=\left(a_{1}^{3}-\frac{1-2a_{1}}{2}a_{1}^{2}-(a_{1}-1)^{3}+\frac{1-2a_{1}}{2}(a_{1}-1)^{2}\right)\times\\
%& \ \ \ \ \left(a_{2}^{3}-\frac{1-2a_{2}}{2}a_{2}^{2}-(a_{2}-1)^{3}+\frac{1-2a_{2}}{2}(a_{2}-1)^{2}\right)\\
%&=
%\end{align*}
%}
%\end{proof}
%%%%%%%%%%%%%%%%%%%%%%%%%%%%%%%%%%%%%%%%%%%%%%%%%%%%%%%%%%%%%%%%
\begin{example} \ \\ \rm{
Here we give two examples of 256 bits semiprimes. 
\begin{itemize}
\item
$\alpha=31184864858157931962611989461653189383$;\\ $\beta=56155612513944961522154793570341370125$
\begin{align*}
\displaystyle\int_{-\frac{1}{6}}^{\alpha} \int_{- \frac{1}{6}}^{\beta}\gamma_{1}(x)&\gamma_{2}(y) \ d_{x}d_{y}=\displaystyle\int_{- \frac{1}{6}}^{\alpha} \gamma_{1}(x)\ d_{x}\int_{- \frac{1}{6}}^{\beta}\gamma_{2}(y)d_{y}\\
&=\displaystyle\int_{- \frac{1}{6}}^{31184864858157931962611989461653189383} (6)\ d_{x}\\
&\times\int_{- \frac{1}{6}}^{56155612513944961522154793570341370125}(6)d_{y}\\
&=\displaystyle\left[6x+ C_{1} \right]_{- \frac{1}{6}}^{31184864858157931962611989461653189383}\\  
& \ \ \times\displaystyle\left[6y+ C_{2} \right]_{- \frac{1}{6}}^{56155612513944961522154793570341370125}\\
&=\displaystyle\left(6\cdot 31184864858157931962611989461653189383+ 1 \right)\\ 
&\times \left(6\cdot 56155612513944961522154793570341370125+ 1 \right) \\
&=y(31184864858157931962611989461653189383)\\
& \ \ \ \times  y(56155612513944961522154793570341370125)\\
& \text{, from lemma \ref{lem1}}\\
&=187109189148947591775671936769919136299\\
&\ \ \ \times 336933675083669769132928761422048220751\\
&=6304338674188041713811886561148660301051\\
& \ \ \ 9216638181159430642741072139609140549\\
&=n
\end{align*}
\item
$\alpha=42575376056348197982869887629770563187$;\\ $\beta=33263421842796264531163131543015665831$
\begin{align*}
\displaystyle\int_{\frac{1}{6}}^{\alpha} \int_{- \frac{1}{6}}^{\beta}\gamma_{1}(x)&\gamma_{2}(y) \ d_{x}d_{y}=\displaystyle\int_{\frac{1}{6}}^{\alpha} \gamma_{1}(x)\ d_{x}\int_{-\frac{1}{6}}^{\beta}\gamma_{2}(y)d_{y}\\
&=\displaystyle\int_{\frac{1}{6}}^{42575376056348197982869887629770563187} (6)\ d_{x}\\
&\times\int_{-\frac{1}{6}}^{33263421842796264531163131543015665831}(6)d_{y}\\
&=\displaystyle\left[6x+ C_{1} \right]_{\frac{1}{6}}^{42575376056348197982869887629770563187}\\  
& \ \ \times\displaystyle\left[6y+ C_{2} \right]_{- \frac{1}{6}}^{33263421842796264531163131543015665831}\\
&=\displaystyle\left(6\cdot 42575376056348197982869887629770563187-1 \right)\\ 
&\times \left(6\cdot 33263421842796264531163131543015665831+ 1 \right) \\
&=y(42575376056348197982869887629770563187)\\
& \ \ \ \times  y(33263421842796264531163131543015665831)\\
& \text{, from lemma \ref{lem1}}\\
&=255452256338089187897219325778623379121\\
&\ \ \ \times 199580531056777587186978789258093994987\\
&=509832969796079184022321203452975440752167\\
& \ \ \ 80739063868280880006751616374466427\\
&=n
\end{align*}
\end{itemize}
}
\end{example}

Additionally, in the same perspective, we have the following result, a new vulnerability on a RSA type modulus:

\begin{theorem} \ \\ \rm{
Le $n$ be a RSA modulus. If the diophantine equation $y^2-2^3n-1=0$ has any solution in $y$, then RSA is broken in constant time complexity $O(1)$ in best case or in logarithmic time complexity $O\left(\log(8n+1)\right)$ in the worst case.
}
\end{theorem}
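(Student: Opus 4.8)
The plan is to convert the Diophantine hypothesis into a statement about consecutive integers and then extract the factors almost for free. First I would note that $8n+1$ is odd, so any solution of $y^{2}=8n+1$ is odd; writing $y=2m+1$ and substituting gives $4m^{2}+4m+1=8n+1$, that is $m(m+1)=2n$. Hence assuming the equation $y^{2}-2^{3}n-1=0$ has a solution is the same as assuming that $2n$ is a product of two consecutive integers $m$ and $m+1$, with $m=\frac{-1+\sqrt{8n+1}}{2}$.

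The number-theoretic core comes next. Since $\gcd(m,m+1)=1$ and $n=pq$ is a product of two odd primes, the identity $m(m+1)=2pq$ distributes the coprime prime-power blocks $\{2\},\{p\},\{q\}$ between the two consecutive factors. Discarding the trivial split that would force $pq=1$, a short finite check shows that the only admissible consecutive pairs are, up to order, $\{p,2q\}$ and $\{q,2p\}$, i.e. exactly the cases $|p-2q|=1$ or $|q-2p|=1$. Because $\gcd(2,n)=1$, in each case $m$ shares precisely one prime of $n$ and $m+1$ the other, so I would recover the factors by two gcd computations $p=\gcd(m,n)$ and $q=\gcd(m+1,n)$; equivalently, halving whichever of $m$ and $m+1$ is even and pairing it with the odd one reads off $p$ and $q$ directly.

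For the algorithm and its complexity, the only nontrivial step is producing $m$, which reduces to testing whether $8n+1$ is a perfect square and, if so, returning its integer square root. In the worst case I would obtain this root by binary search on $[1,8n+1]$: each iteration performs one squaring and one comparison and halves the interval, so the test terminates in $O(\log(8n+1))$ iterations, which is the stated worst-case bound. Once $y$ (hence $m$) is known, for instance when it is supplied or immediately verifiable, the factor extraction is a constant number of large-integer operations (a halving, or two gcds), giving the best-case $O(1)$ claim.

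The main obstacle is less a deep difficulty than a matter of rigor on two fronts. First, the enumeration of coprime factorizations must be carried out carefully to confirm that no consecutive split other than the $\{\,\cdot\,,2\,\cdot\,\}$ pairings can occur; this is a finite and elementary case analysis, but it is the crux of the argument. Second, I would be explicit that the hypothesis is strongly restrictive, since it forces one prime to be essentially twice the other, so the theorem exhibits a narrow structural vulnerability rather than a general factoring method, and the quoted complexities should be understood as the cost of the square-root test together with the constant-time recovery, not as a bound on factoring arbitrary RSA moduli.
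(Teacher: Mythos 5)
Your proof is correct and rests on the same arithmetic core as the paper's --- the equivalence between solvability of $y^{2}=8n+1$ and $n$ being a triangular number, followed by an integer square-root computation --- but your logical direction and key middle lemma are genuinely different, and stronger. The paper argues \emph{forwards} from the unproven assumption $n=\sum_{l=1}^{p}l=p(p+1)/2$, i.e., it simply posits that the triangular index coincides with the prime factor $p$, derives the quadratic $y^{2}+y-2n=0$, and reads off $p=\lvert -1\pm\sqrt{1+8n}\rvert/2$; it never verifies the converse, namely that a solution of $y^{2}-2^{3}n-1=0$ actually forces the resulting root to be (essentially) a factor of $n$. Your substitution $y=2m+1$, the reduction to $m(m+1)=2n$, and the coprime-distribution analysis of the blocks $\lbrace 2\rbrace,\lbrace p\rbrace,\lbrace q\rbrace$ over the consecutive pair --- concluding that only the splits $\lbrace p,2q\rbrace$ and $\lbrace q,2p\rbrace$ survive, whence $p=\gcd(m,n)$ and $q=\gcd(m+1,n)$ --- supplies exactly this missing step and turns the paper's heuristic derivation into a complete argument, with an explicit recovery procedure the paper leaves implicit. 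Your closing caveat is also borne out by the paper's own examples (e.g., $25651=113\times 227$ with $227=2\cdot 113+1$): the hypothesis forces $\lvert p-2q\rvert=1$ or $\lvert q-2p\rvert=1$, so the theorem describes a narrow structural vulnerability rather than a general factoring method, a point the paper's remark about triangular moduli does not make explicit. On complexity, your binary-search bound of $O(\log(8n+1))$ iterations and the $O(1)$ best case match the paper's (equally loose) accounting, which counts square-root iterations rather than bit operations; if one wanted full rigor here, each iteration costs a large-integer squaring, but this criticism applies to the paper's statement as much as to your proof.
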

\begin{proof} \ \\ \rm{
%sor simply $p>q$
Let $n=pq$ a semiprime with $q<p$.\\
Assume $\displaystyle n=\sum_{l=1}^{p}l=1+2+3+ \ \cdots \ p$.\\ From the Gauss sum sequence, $n=S_{p}=\displaystyle p\frac{p+1}{2}$ or $n=\displaystyle\left( \begin{array}{c}
n+1 \\ 
2
\end{array}\right)$ in binomial coefficient representation. This represents the area of a rectangle whose width is $p$ and height is $\frac{p+1}{2}$.\\ In other words, $p$ verifies the equations $$y^{2}+y-2n=0 \ \ (1)$$ and $$x^2-\displaystyle\left(\frac{3p+1}{2}\right)x+\displaystyle n=0 \ \ (2)$$ from $x^2-Sx+P=0$ result.\\
Considering equation $(1)$, this equation yields integer roots if and only if its discriminant $1+2^3n$ is a square. Equivalently, there exists an integer $y$ such that $y^2-2^3n-1=0$. In this case $\displaystyle p=\frac{\lvert-1\pm\left(1+2^3n\right)^{1/2}\rvert}{2}$. Finally, depending on the square root computation algorithm, its complexity is at best in $O(1)$ or at worst in $O\left(\log(8n+1)\right)$.
}
\end{proof}
\textit{Remark} \ \\ \rm{
This theorem can be reformulated as follows:\\
If a RSA modulus is a triangular number, then RSA is broken in at least constant time complexity and at most logarithmic time complexity.
}

\begin{example} \ \rm{
\begin{itemize}
\item $n=15$, $8n+1=121$ is a square, then $p=\lvert 5 \vert=5$ is a prime factor of $n$: $n=3\times 5$.
\item $n=25651$, $8n+1=205209$ is a square, then $p=\lvert -227\rvert=227$ is a prime factor of $n$: $n=113\times 227$.
\item $n=16744225501$, $8n+1=133953804009$ is a square, then $p=\lvert -182999\rvert=182999$.
\item $n=22008842474653$, $8n+1=176070739797225$ is a square, then $p=\lvert -6634583\rvert=6634583$.
\item $n=21997001338003$, $8n+1=$ is a square, then $p=\lvert 6632797\rvert=6632797$.
\end{itemize}
}
\end{example}

\subsection{Matrix Decomposition point of view (MDpv)}
Here we present another approach, consisting in considering the modulus $n$ as a matrix determinant.
%\label{sec:system}
\begin{theorem} \ \\ \rm{
Let $n=pq$ be a RSA modulus, then there exists a matrix decomposition  $N$ of $n$ of size $2$ such that $N=PQ$ with $\det N=n$, $\det P=p$ and $\det Q=q$.
}
\end{theorem}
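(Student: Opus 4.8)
The plan is to exploit the multiplicativity of the determinant, which collapses the whole statement into an elementary existence claim. Since $\det(PQ)=\det P\cdot\det Q$ for any two $2\times 2$ integer matrices, once I have produced a matrix $P$ with $\det P=p$ and a matrix $Q$ with $\det Q=q$, the product $N:=PQ$ is automatically an integer matrix satisfying $\det N=\det P\cdot\det Q=pq=n$. Hence the requirement $\det N=n$ carries no independent content and can be set aside entirely; the real task is only to exhibit integer matrices with prescribed determinants $p$ and $q$.

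To that end I would first reduce to the generic problem: for an arbitrary integer $m$, construct a $2\times 2$ integer matrix of determinant $m$. The cleanest choice is the diagonal matrix $\operatorname{diag}(1,m)$, whose determinant is $m$ by inspection; specializing to $m=p$ and $m=q$ yields explicit $P$ and $Q$. If one wants to emphasize that the decomposition is highly non-unique, I would instead note that any matrix $\left(\begin{smallmatrix} a & b\\ c & d\end{smallmatrix}\right)$ with $ad-bc=m$ works, so infinitely many such $P$ and $Q$ exist. Either way the verification is a single line, and no estimate, limiting, or case analysis is required.

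With $P$ and $Q$ fixed, the last step is simply to set $N:=PQ$, compute the product explicitly, and read off $\det N = \det P\cdot\det Q = pq = n$, together with $\det P=p$ and $\det Q=q$ by construction. This completes the existence argument exactly as stated, so there is in fact no technical obstacle in the proof itself.

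The genuine difficulty — which the theorem deliberately does not assert — lies in the converse direction: given only $N$ (equivalently, given only $n=\det N$), recovering a decomposition $N=PQ$ whose factor determinants are precisely the prime factors $p$ and $q$ is exactly as hard as the original factoring problem. I therefore expect the substantive effort in the surrounding development to be organizing this non-unique matrix decomposition so that the search for $P$ and $Q$ becomes a solvable algebraic system, for instance through the Gröbner-basis and diagonalization techniques flagged in the abstract, rather than in the existence statement proved here.
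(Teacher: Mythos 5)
Your proof is correct, and it settles the statement as literally written, but it takes a genuinely different route from the paper. The paper does not use diagonal matrices: it first manufactures a non-trivial $N=\left(\begin{smallmatrix} a & b\\ c & d\end{smallmatrix}\right)$ with $ad-bc=n$ via Bezout's identity, then posits $P=(x_i)$ and $Q=(y_j)$ with eight unknown entries, writes out the four bilinear equations coming from $PQ=N$, solves them rationally for $x_1,\dots,x_4$ in terms of $y_1,\dots,y_4$, and verifies by direct expansion that $x_1x_4-x_2x_3=\frac{n}{y_1y_4-y_2y_3}$ --- in effect re-deriving by hand the multiplicativity of the determinant that you simply invoke. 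What the paper's route buys is exactly what your last paragraph predicts: the computation doubles as the setup for the algorithmic sequel, producing the underdetermined nonlinear system of five equations in eight unknowns (the four entry equations of $PQ=N$ together with $(x_1x_4-x_2x_3)(y_1y_4-y_2y_3)=n$) that is then attacked with Buchberger's algorithm and diagonalization. What your route buys is rigor and economy: as a proof of the existence claim, the paper's argument is actually incomplete --- it never shows that the rationally-solved $x_i$ can be chosen integral, its final chain of equalities ends circularly in ``$=x_1x_4-x_2x_3$'', and the verification only yields $\det P\cdot\det Q=n$ rather than $\det P=p$ and $\det Q=q$ individually (which no effective argument could pin down without already factoring $n$) --- whereas your choice $P=\operatorname{diag}(1,p)$, $Q=\operatorname{diag}(1,q)$, $N=PQ=\operatorname{diag}(1,n)$ disposes of existence in one line. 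Your closing observation, that the theorem's assertion is the trivial direction and the real difficulty is recovering a suitable decomposition of a \emph{given} $N$, is precisely the role the theorem plays in the paper, so your reading of where the substance lies is accurate; the only thing your write-up omits relative to the paper is the explicit derivation of the eight-unknown system, which the paper needs later but which is not required by the statement itself.
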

\begin{proof} \ \\ 
Triavially, one can find non zero integers $a$ and $b$ so that applying Bezout's theorem, there exists integers $d, c$ such that $ad-bc=\gcd\text{(}a, \ b\text{)}=n$.
Let $N=
\begin{pmatrix}
a & b \\ 
c & d
\end{pmatrix}$, $P=
\begin{pmatrix}
x_{1} & x_{2} \\ 
x_{3} & x_{4}
\end{pmatrix} $ and $Q=
\begin{pmatrix}
y_{1} & y_{2} \\ 
y_{3} & y_{4}
\end{pmatrix}$ $\in \displaystyle  M_{2}\left(\mathbb{Z}\right)$.
$n=pq$ if and only if \\ $\displaystyle ad-bc=\left(x_{1}x_{4}-x_{2}x_{3}\right) \left(y_{1}y_{4}-y_{2}y_{3}\right)$ or equivalently \\ $\displaystyle\det N=\det P \cdot \det Q$\\
$PQ=\begin{pmatrix}
x_{1} & x_{2} \\ 
x_{3} & x_{4}
\end{pmatrix} 
\begin{pmatrix}
y_{1} & y_{2} \\ 
y_{3} & y_{4}
\end{pmatrix}=
\begin{pmatrix}
x_{1}y_{1} \text{+} x_{2}y_{3} & x_{1}y_{2} \text{+} x_{2}y_{4} \\ 
x_{3}y_{1} \text{+} x_{4}y_{3} & x_{3}y_{2} \text{+} x_{4}y_{4}
\end{pmatrix}$
Then
\begin{eqnarray}\label{s1}
	x_{1}, x_{2}, x_{3}, x_{4}, y_{1}, y_{2}, y_{3}, y_{4} \text{ satisfy }
	\left\lbrace
	\begin{array}{ll}
	x_{1}y_{1} \text{+} x_{2}y_{3}=a\\
	x_{1}y_{2} \text{+} x_{2}y_{4}=b\\
	x_{3}y_{1} \text{+} x_{4}y_{3}=c\\
	x_{3}y_{2} \text{+} x_{4}y_{4}=d
	\end{array} \right.
\end{eqnarray}
From this system,\\
$\displaystyle x_{1}=\frac{ay_{4}-by_{3}}{y_{1}y_{4}-y_{2}y_{3}}$, $\displaystyle x_{2}=\frac{ay_{2}-by_{1}}{y_{1}y_{4}-y_{2}y_{3}}$, $\displaystyle x_{3}=\displaystyle\frac{cy_{4}-dy_{3}}{y_{1}y_{4}-y_{1}y_{4}}$, $x_{4}=\displaystyle\frac{cy_{2}-dy_{1}}{y_{1}y_{4}-y_{2}y_{3}}$
We verify that:
\begin{align*}
x_{1}x_{4}-x_{2}x_{3}&=\displaystyle\frac{ay_{4}-by_{3}}{y_{1}y_{4}-y_{2}y_{3}} \times \frac{dy_{1}-cy_{2}}{y_{1}y_{4}-y_{2}y_{3}}-\frac{cy_{4}-dy_{3}}{y_{1}y_{4}-y_{2}y_{3}}\times \frac{by_{1}-ay_{2}}{y_{1}y_{4}-y_{2}y_{3}}\\
&=\frac{\left(ay_{4}-by_{3}\right)\left(dy_{1}-cy_{2}\right)-\left(cy_{4}-dy_{3}\right)\left(by_{1}-ay_{2}\right)}{\left(y_{1}y_{4}-y_{2}y_{3}\right)^{2}}
\end{align*}
\begin{align*}
\ \ \ \ \ \ \ \ &=\displaystyle\frac{\left(bc-ad\right)y_{2}y_{3}\text{+}\left(ad-bc\right)y_{1}y_{4}\text{+}\left(ac-ac\right)y_{2}y_{4}\text{+}\left(bd-bd\right)y_{1}y_{3}}{\left(y_{1}y_{4}-y_{2}y_{3}\right)^{2}}\\
&=\displaystyle\frac{\left(bc-ad\right)y_{2}y_{3}\text{+}\left(ad-bc\right)y_{1}y_{4}}{\left(y_{1}y_{4}-y_{2}y_{3}\right)^{2}}\\
&=\displaystyle\frac{\left(ad-bc\right)\left(y_{1}y_{4}-y_{2}y_{3}\right)}{\left(y_{1}y_{4}-y_{2}y_{3}\right)^{2}}\\
&=\displaystyle\frac{ad-bc}{y_{1}y_{4}-y_{2}y_{3}}\\
&=\displaystyle\frac{n}{y_{1}y_{4}-y_{2}y_{3}}\\
&=x_{1}x_{4}-x_{2}x_{3}
\end{align*}
\end{proof}
From this, we have:
\begin{itemize}
\item $\displaystyle\gcd\left(\det P, \ n\right) \ \mid \ n$,\\
then $ \gcd\left(x_{1}x_{4}-x_{2}x_{3}, \ n \right) \ \mid \ n$, hence $\gcd\left(x_{1}x_{4}-x_{2}x_{3}, \ n \right)=y_{1}y_{4}-y_{2}y_{3}$\\
\item $\displaystyle\gcd\left(\det Q, \ n\right) \ \mid \ n$,\\
then $ \gcd\left(y_{1}y_{4}-y_{2}y_{3}, \ n \right) \ \mid \ n$, hence $\gcd\left(y_{1}y_{4}-y_{2}y_{3}, \ n \right)=x_{1}x_{4}-x_{2}x_{3}$
\end{itemize}
Hence we have the following new system from $\left(\ref{s1}\right)$.
\begin{eqnarray}
	\left\lbrace
	\begin{array}{ll}
	x_{1}y_{1} \text{+} x_{2}y_{3}=a\\
	x_{1}y_{2} \text{+} x_{2}y_{4}=b\\
	x_{3}y_{1} \text{+} x_{4}y_{3}=c\\
	x_{3}y_{2} \text{+} x_{4}y_{4}=d\\
	\left(x_{1}x_{4}-x_{2}x_{3}\right)\left(y_{1}y_{4}-y_{2}y_{3}\right) = n
	\end{array} \right.
\end{eqnarray}

Which is a non linear system of $5$ equations with $8$ unkowns. \\ \\
\textbf{System resolution} \ \\
In this subsubsection, we propose some approaches to solve this system.  \\ \\
\textbf{Gröbner Basis} \ \\
We apply gröbner basis to the resolution of this system of equations.\\
We start by setting the polynomials to zero as follows:
\begin{eqnarray}
	\left\lbrace
	\begin{array}{ll}
	x_{1}y_{1} \text{+} x_{2}y_{3}-a=0\\
	x_{1}y_{2} \text{+} x_{2}y_{4}-b=0\\
	x_{3}y_{1} \text{+} x_{4}y_{3}-c=0\\
	x_{3}y_{2} \text{+} x_{4}y_{4}-d=0\\
	\left(x_{1}x_{4}-x_{2}x_{3}\right)\left(y_{1}y_{4}-y_{2}y_{3}\right)-n=0
	\end{array} \right.
\end{eqnarray}
Since $x_{1}, x_{2}, x_{3}, x_{4}, y_{1}, y_{2}, y_{3}, y_{4}$ are integers, we consider the polynomial ring $\mathbb{Z}\left[x_{1}, x_{2}, x_{3}, x_{4}, y_{1}, y_{2}, y_{3}, y_{4} \right]$.\\
Another consideration is the lexicographic order as monomial order given by $$x_{1}>x_{2}>x_{3}>x_{4}>y_{1}>y_{2}>y_{3}>y_{4}$$ and finally we consider the ideal $I$ generated by $f_{1}, f_{2}, f_{3}, f_{4}$ and $f_{5}$ as follows $I=\left\langle f_{1}, f_{2}, f_{3}, f_{4}, f_{5} \right\rangle$ over $\mathbb{Z}\left[x_{1}, x_{2}, x_{3}, x_{4}, y_{1}, y_{2}, y_{3}, y_{4} \right]$ where 
 $f_{1}=x_{1}y_{1} \text{+} x_{2}y_{3}-a$;\\ $f_{2}=x_{1}y_{2} \text{+} x_{2}y_{4}-b$; \ $f_{3}=x_{3}y_{1} \text{+} x_{4}y_{3}-c$;\\ $f_{4}=x_{3}y_{2} \text{+} x_{4}y_{4}-d$; and $f_{5}=\left(x_{1}x_{4}-x_{2}x_{3}\right)\left(y_{1}y_{4}-y_{2}y_{3}\right)-n$.\\
We then apply the Buchberger's algorithm to generate the gröbner basis.\\
Let $G$ be the gröbner basis, we initialize $G$ as $G=\displaystyle\lbrace f_{1}, f_{2}, f_{3}, f_{4}, f_{5}\rbrace$ and we compute the S-polynomials as follows: for each pair of polynomials $f_{i}$ and $f_{j}$ in $G$, 
$$S\displaystyle\left(f_{i}, f_{j}\right)=\frac{Lcm\left(Lm(f_{i}), Lm(f_{j}) \right)}{Lt(f_{i})}f_{i}-\frac{Lcm\left(Lm(f_{i}), Lm(f_{j}) \right)}{Lt(f_{j})}f_{j}$$
 where $Lcm$ is the least common multiple, $Lt$ the leading term and $Lm$ the leading monom.\\
Since our ideal $I$ is generated by $5$ polynomials, we expect the set $S$ of S-polynomials to have a cardinal of $Card\left(S\right)\displaystyle= \begin{pmatrix}
5 \ \\ 
2 \
\end{pmatrix}=\frac{5!}{2!3!}=10$.
Computation of S-Polynomials:\\
$Lt(f_{1})=Lm(f_{1})=x_{1}y_{1}$; \ \ \ \ \ \ \ $Lt(f_{2})=Lm(f_{2})=x_{1}y_{2}$;\\ $Lt(f_{3})=Lm(f_{3})=x_{3}y_{1}$; \ \ \ \ \ \ \ 
$Lt(f_{4})=Lm(f_{4})=x_{3}y_{2}$;\\
$Lt(f_{5})=Lm(f_{5})=x_{1}x_{4}y_{1}y_{4}$.\\
We get:\\
$S\displaystyle\left(f_{1}, f_{2}\right)=y_{2}f_{1}-y_{1}f_{2}$; \ \ \ \ \ \ \ \ \ \
$S\displaystyle\left(f_{1}, f_{3}\right)=x_{3}f_{1}-x_{1}f_{3}$; \\

$S\displaystyle\left(f_{1}, f_{4}\right)=x_{3}y_{2}f_{1}-x_{1}y_{1}f_{4}$; \ \ \ \ \ \ \ \ \ \
$S\displaystyle\left(f_{1}, f_{5}\right)=x_{4}y_{4}f_{1}-f_{5}$; \\

$S\displaystyle\left(f_{2}, f_{3}\right)=x_{3}y_{1}f_{2}-x_{1}y_{2}f_{3}$;  \ \ \ \ \ \ \ \ \ \
$S\displaystyle\left(f_{2}, f_{4}\right)=x_{3}y_{1}f_{2}-x_{1}y_{1}f_{4}$; \\

$S\displaystyle\left(f_{2}, f_{5}\right)=x_{4}y_{1}y_{4}f_{2}-y_{2}f_{5}$; \ \ \ \ \ \ \ \ \ \
$S\displaystyle\left(f_{3}, f_{4}\right)=y_{2}f_{3}-y_{1}f_{4}$; \\

$S\displaystyle\left(f_{3}, f_{5}\right)=x_{1}x_{4}y_{4}f_{3}-x_{3}f_{5}$; \ \ \ \ \ \ \ \ \ \
$S\displaystyle\left(f_{4}, f_{5}\right)=x_{1}x_{4}y_{1}y_{4}f_{4}-x_{3}y_{2}f_{5}$;\\
In the polynomial ring $\mathbb{Z}\left[x_{1}, x_{2}, x_{3}, x_{4}, y_{1}, y_{2}, y_{3}, y_{4} \right]$, a Gröbner basis is:
\vspace{-0.3cm}
\begin{align*}
\displaystyle &ax_{3} - cx_{1} + x_{1}x_{4}y_{3} - x_{2}x_{3}y_{3}=0 \ (1),\\ 
&bx_{3} - dx_{1} + x_{1}x_{4}y_{4} - x_{2}x_{3}y_{4}=0 \ (2),\\ 
&-a + x_{1}y_{1} + x_{2}y_{3}=0, -b + x_{1}y_{2} + x_{2}y_{4}=0 \ (3),\\ 
&a^{2}x_{3}y_{4} - abx_{3}y_{3} - acx_{1}y_{4} + bcx_{1}y_{3} + nx_{1}y_{3}=0 \ (4),\\ 
&ax_{3}y_{4} - bx_{3}y_{3} - cx_{1}y_{4} + dx_{1}y_{3}=0 \ (5),\\ &ay_{2} - by_{1} + x_{2}y_{1}y_{4} - x_{2}y_{2}y_{3}=0 \ (6),\\
 &a^{2}x_{4}y_{2} - abx_{4}y_{1} - acx_{2}y_{2} + bcx_{2}y_{1} + nx_{2}y_{1}=0 \ (7),\\ 
&ax_{4}y_{2} - bx_{4}y_{1} - cx_{2}y_{2} + dx_{2}y_{1}=0 \ (8),\\ &a^{2}x_{4}y_{4} - abx_{4}y_{3} - acx_{2}y_{4} - an + bcx_{2}y_{3} + nx_{2}y_{3}=0 \ (9),\\ 
&ax_{4}y_{4} - bx_{4}y_{3} - cx_{2}y_{4} + dx_{2}y_{3} - n=0 \  (10),\\ 
&-c + x_{3}y_{1} + x_{4}y_{3}=0, -d + x_{3}y_{2} + x_{4}y_{4}=0 \ (11),\\ 
&cy_{2} - dy_{1} + x_{4}y_{1}y_{4} - x_{4}y_{2}y_{3}=0 \ (12),\\
 &ad - bc - n=0 \ (13)
\end{align*}
given in term of a system of equations as
\begin{eqnarray}\label{syst}
	\left\lbrace
	\begin{array}{ll}
	\left\lbrace
	\begin{array}{ll}
	ad - bc - n=0\\
	x_{3}y_{1} + x_{4}y_{3}-c=0\\
	x_{1}y_{1} + x_{2}y_{3}-a=0 \ \ \ \ \ \ \ \ \ \ \ \ \ (3)\\
	x_{1}y_{2} + x_{2}y_{4}-b=0\\
	x_{3}y_{2} + x_{4}y_{4}-d=0\\
	\end{array} \right.\\
	cy_{2} - dy_{1} + x_{4}y_{1}y_{4} - x_{4}y_{2}y_{3}=0\\
	ax_{3} - cx_{1} + x_{1}x_{4}y_{3} - x_{2}x_{3}y_{3}=0\\
	ay_{2} - by_{1} + x_{2}y_{1}y_{4} - x_{2}y_{2}y_{3}=0\\
	bx_{3} - dx_{1} + x_{1}x_{4}y_{4} - x_{2}x_{3}y_{4}=0\\
	ax_{3}y_{4} - bx_{3}y_{3} - cx_{1}y_{4} + dx_{1}y_{3}=0\\
	ax_{4}y_{2} - bx_{4}y_{1} - cx_{2}y_{2} + dx_{2}y_{1}=0\\
	ax_{4}y_{4} - bx_{4}y_{3} - cx_{2}y_{4} + dx_{2}y_{3} - n=0\\
	a^{2}x_{3}y_{4} - abx_{3}y_{3} - acx_{1}y_{4} + bcx_{1}y_{3} + nx_{1}y_{3}=0\\
	a^{2}x_{4}y_{2} - abx_{4}y_{1} - acx_{2}y_{2} + bcx_{2}y_{1} + nx_{2}y_{1}=0\\
	a^{2}x_{4}y_{4} - abx_{4}y_{3} - acx_{2}y_{4} - an + bcx_{2}y_{3} + nx_{2}y_{3}=0\\
	\end{array} \right.
\end{eqnarray}
\textbf{Discussion}:\\
A part from trying to find factors $p$ and $q$ through system resolution, one can for different matrix decompositions $N$ of $n$, diagonalize $N$ if $(a+d)^2-4n$ is a square. Indeed the characteristic polynomial in this case $P(\lambda)=(a-\lambda)(d-\lambda)-bc=\lambda^2-(a+d)\lambda+n$, which is diagonalizable over $\mathbb{Z}$ only if the discriminant of the characteristic polynomial is a square. 

\begin{example} \ \\ \rm{
Let $n=9829760426531292080598278473834159188865611141619845\\3609045170174910824769389$
with $a=98297604265312920805982784\\738341591889580784184957701040844308085531017447603$; \\
$b=9829760426531292080598278473834159189017411569821850616\\1959832718260667360288$\\
$c=331341255498442310683613277890542765532$\\
$d=331341255498442310683613277890542765535$
$N=
\begin{pmatrix}
a & b \\ 
c & d
\end{pmatrix}$
}. Considering the System \ref{syst}, a solution of the system gives:
$x_{1}=296665756630402560557762316364824956342$; \\
$x_{2}=296665756630402560557762316364824956343$;\\
$x_{3}=1$; $x_{4}=2$.\\
$y_{1}=331341255498442310683613277890542765530$; \\
$y_{2}=331341255498442310683613277890542765531$; \\
$y_{3}=1$; $y_{4}=2$; Hence 
$P=
\begin{pmatrix}
x_{1} & x_{2} \\ 
x_{3} & x_{4}
\end{pmatrix} $ and $Q=
\begin{pmatrix}
y_{1} & y_{2} \\ 
y_{3} & y_{4}
\end{pmatrix}$ $\in \displaystyle  M_{2}\left(\mathbb{Z}\right)$,\\
 $p=\det(P)=x_{1}x_{4}-x_{2}x_{3}=29666575663040256055776231636482495\\6341$ and $q=\det(Q)=y_{1}y_{4}-y_{2}y_{3}=331341255498442310683613277\\890542765529$
\end{example}
\subsection{Modulus Algebraic Form Point of View (MAFpv)}
In this subsection, we present another approach that consists in considering different algebraic forms of the semiprime factors.
\begin{theorem} \ \\ \rm{
Any RSA modulus, particularly any semiprime $n$ can be represented in the ring $\mathbb{Z}[x, y]$ as one of the following:
\begin{eqnarray*}
n(x, y)=
\left\lbrace
\begin{array}{ll}
36xy-6(x+y)+1\\
36xy+6(x+y)+1\\
36xy+6(x-y)-1\\
36xy-6(x-y)-1
\end{array} \right.
\end{eqnarray*}
}
\end{theorem}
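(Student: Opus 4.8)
The plan is to reduce the statement directly to Lemma~\ref{lem1}, which already guarantees that every prime greater than $3$ admits a representation of the form $6a\pm1$ for some positive integer $a$. Since a RSA modulus $n=pq$ is a product of two primes $p,q$ both larger than $3$, I would first invoke the lemma twice to write $p=6x+\varepsilon_{1}$ and $q=6y+\varepsilon_{2}$, where each sign $\varepsilon_{1},\varepsilon_{2}\in\{+1,-1\}$ and $x,y$ are the associated positive integers furnished by the lemma.

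The core of the argument is then a finite case analysis over the four possible sign patterns $(\varepsilon_{1},\varepsilon_{2})$. For each pattern I would expand the product $n=pq=(6x+\varepsilon_{1})(6y+\varepsilon_{2})$ and collect terms. Concretely,
\begin{align*}
(6x-1)(6y-1)&=36xy-6(x+y)+1,\\
(6x+1)(6y+1)&=36xy+6(x+y)+1,\\
(6x-1)(6y+1)&=36xy+6(x-y)-1,\\
(6x+1)(6y-1)&=36xy-6(x-y)-1.
\end{align*}
These four expansions are precisely the four bivariate polynomials listed in the statement, so every RSA modulus falls into exactly one of the four families when viewed as an element of $\mathbb{Z}[x,y]$.

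Since the computation is entirely elementary, the only genuine point requiring care is the hypothesis that both prime factors exceed $3$, which is exactly the regime in which Lemma~\ref{lem1} applies. This is automatic for RSA moduli, whose factors are large primes, but it would require a separate remark for a general semiprime divisible by $2$ or $3$. I would therefore phrase the result for RSA moduli and explicitly flag the exclusion of the factors $2$ and $3$, so that the four forms above cover all remaining cases without gaps. The main obstacle, such as it is, is purely bookkeeping: ensuring that the $+$/$-$ conventions in the two symmetric mixed cases $(\varepsilon_{1},\varepsilon_{2})=(-1,+1)$ and $(+1,-1)$ are matched correctly to the $6(x-y)$ versus $-6(x-y)$ terms, which the expansions above settle.
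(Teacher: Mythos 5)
Your proposal is correct and follows essentially the same route as the paper: the paper's proof likewise invokes Lemma~\ref{lem1} (phrased there via the solutions $t(y)=6y\pm1$, $z(x)=6x\pm1$ of the differential equation $y'=6$, $y(0)=\pm1$) and then runs the identical four-case sign expansion producing exactly the four polynomials listed. Your added caveat that both prime factors must exceed $3$ --- so that semiprimes divisible by $2$ or $3$ fall outside the stated forms --- is a point of care the paper leaves implicit, but it does not change the argument.
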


\begin{proof} \ \\ \rm{
From Lemma \ref{lem1}, there exist differential equations $t^{'}=6$ with $t(0)=\pm 1$ and $z^{'}=6$ with $z(0)=\pm 1$ such that $n(x, y)=z(x)t(y)$.
We then distinguish the following cases:
\begin{description}
\item[If $t(0)=-1$ and $z(0)=-1$]: $t(y)=6y-1$ and $z(x)=6x-1$. Then $n(x,y)=z(x)t(y)=(6x-1)(6y-1)=36xy-6(x+y)+1$.
\item[If $t(0)=1$ and $z(0)=1$]: $t(y)=6y+1$ and $z(x)=6x+1$. Then $n(x,y)=z(x)t(y)=(6x+1)(6y+1)=36xy+6(x+y)+1$.
\item[If $t(0)=-1$ and $z(0)=1$]: $t(y)=6y-1$ and $z(x)=6x+1$. Then $n(x,y)=z(x)t(y)=(6x+1)(6y-1)=36xy-6(x-y)-1$.
\item[If $t(0)=1$ and $z(0)=-1$]: $t(y)=6y+1$ and $z(x)=6x-1$. Then $n(x,y)=z(x)t(y)=(6x-1)(6y+1)=36xy+6(x-y)-1$.
\end{description}

All these can be re-written as:
\begin{eqnarray*}
n(x, y)=
\left\lbrace
\begin{array}{ll}
36xy-6(x+y)+1\\
36xy+6(x+y)+1\\
36xy+6(x-y)-1\\
36xy-6(x-y)-1
\end{array} \right. \text{or } 
n(x, y)=
\left\lbrace
\begin{array}{ll}
36xy+6\left|x+y\right|+1\\
36xy+6\left|x-y \right|-1
\end{array} \right.
\end{eqnarray*}
}
\end{proof}

\begin{remark} \ \rm{
\begin{description}
\item[$(1)$] The polynomials $n(x, y)\in\mathbb{Z}[x, y]$ as defined in the previous theorem have a total degree of $d=2$, ie every monomial $x^{i}y^{j}$ satisfies $i+j \leq d=2$
\item[$(2)$] We consider $X\in\mathbb{Z}_{>0}$ respectively $Y\in\mathbb{Z}_{>0}$, the upper bounds for $x$ respectively for $y$ and $M$ a RSA type modulus greater than $n$.
\end{description}
}
\end{remark}

From this remark, we have the following theorems:
\begin{theorem} \ \\ \rm{
Let $\displaystyle\deg_{x}\left(n(x, y)\right), \deg_{y}\left(n(x, y)\right) < d=2$;
\begin{eqnarray*}
n(x, y)=\displaystyle\sum_{0\leq i, j < d}n_{i,j}\delta_{ij}x^{i}y^{j}=
\left\lbrace
\begin{array}{ll}
36xy+6\left|x+y\right|+1\\
36xy+6\left|x-y \right|-1
\end{array} \right.
\end{eqnarray*}
where $n_{00}=1$, $n_{11}=36$, $n_{10}=n_{01}=6$ and \begin{eqnarray*}
\delta_{ij}=
\left\lbrace
\begin{array}{ll}
1 \ \text{ if } i=j\\
0 \ \text{ else }
\end{array} \right.
\end{eqnarray*}

%%%

and for $\vert x \vert <X$ and $\vert y \vert <Y$, define $W=\displaystyle\max_{0\leq i, j \leq d}\left|n_{i, j}\right|X^{i}Y^{j}$.\\
If $XY<W^{1/3}=6^{2/3}X^{1/3}Y^{1/3}$ then we can recover solutions $(x, y)$ in polynomial time in $\log(W)$ and $2^{2}$.
}
\end{theorem}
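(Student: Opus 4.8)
The plan is to recognize the statement as a direct instance of Coppersmith's method for small roots of a bivariate integer polynomial, and to verify that the degree-two specialization reproduces exactly the bound $XY < W^{1/3}$. I would fix one of the four algebraic forms, say $F(x,y) = 36xy + 6(x+y) + 1$ (the remaining cases being symmetric), and observe from Lemma~\ref{lem1} that the sought factorization corresponds to an integer root $(x_{0}, y_{0})$ with $p = 6x_{0}\pm 1$ and $q = 6y_{0}\pm 1$, so that $F(x_{0}, y_{0})$ equals the known target modulus and $(x_{0}, y_{0})$ lies in the box $|x_{0}| < X$, $|y_{0}| < Y$. The quantity $W = \max_{0\leq i,j\leq d}|n_{i,j}|X^{i}Y^{j}$ is precisely the $\infty$-norm of the coefficient vector of $F(xX, yY)$, which for large bounds equals $36XY$, so that $W^{1/3} = 6^{2/3}X^{1/3}Y^{1/3}$ as claimed.

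First I would build the Coppersmith lattice. Fixing an enlargement parameter $m$, I form the family of shift polynomials $x^{i}y^{j}F(x,y)$ together with the pure monomial shifts $x^{i}y^{j}W$, each monomial $x^{i}y^{j}$ being scaled by $X^{i}Y^{j}$; every such polynomial vanishes at $(x_{0}, y_{0})$ modulo $W$, and the scaling converts the Euclidean norm of a coefficient vector into a bound on the polynomial's value at the bounded root. Applying LLL to the resulting basis returns, in time polynomial in the lattice dimension and in $\log W$, two short vectors corresponding to polynomials $h_{1}, h_{2}$. By the Howgrave--Graham lemma, once $\|h_{k}(xX, yY)\|$ falls below $W/\sqrt{\dim}$ the congruence $h_{k}(x_{0}, y_{0}) \equiv 0 \pmod{W}$ is forced to hold over $\mathbb{Z}$, that is, $h_{k}(x_{0}, y_{0}) = 0$ exactly.

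Next I would extract the bound from a determinant computation: for the degree-$d$ bivariate construction the LLL output meets the Howgrave--Graham threshold exactly when $XY < W^{2/(3d)}$, and substituting $d = 2$ yields $XY < W^{1/3}$, matching the hypothesis; the lattice here has dimension governed by the constant $d^{2} = 2^{2}$, which accounts for the claimed polynomial dependence on $\log(W)$ and on $2^{2}$. Having two integer polynomials $h_{1}, h_{2}$ both vanishing at $(x_{0}, y_{0})$, I would compute the resultant $\mathrm{Res}_{y}(h_{1}, h_{2}) \in \mathbb{Z}[x]$, read off the integer candidates for $x_{0}$ from its roots, recover $y_{0}$ by back-substitution into $F$, and finally obtain $p = 6x_{0}\pm 1$ and $q = 6y_{0}\pm 1$.

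The main obstacle, as in every bivariate Coppersmith argument, is guaranteeing that the two polynomials $h_{1}, h_{2}$ produced by LLL are algebraically independent, so that $\mathrm{Res}_{y}(h_{1}, h_{2})$ does not vanish identically; if $h_{1}$ and $h_{2}$ shared a common factor the resultant would be zero and the root would not be pinned down. I would address this either by invoking an irreducibility or genericity hypothesis on $F$, or by Coron's single-polynomial reformulation, and then verify that the sparse form $36xy + 6(x+y) + 1$ does not fall into a degenerate case. This algebraic-independence step, rather than the lattice estimate, is where the real work lies.
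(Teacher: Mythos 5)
Your proposal takes essentially the same route as the paper: the paper's entire proof of this theorem is the single sentence that it is ``a direct application of coppersmith's theorem on bivariate polynomials \cite{coppersmith}'', and your lattice construction with LLL, the Howgrave--Graham threshold, $W=36XY$, and the specialization $XY<W^{2/(3d)}=W^{1/3}$ at $d=2$ (with running time polynomial in $\log W$ and $2^{d}$) is precisely what that citation packages. The one refinement worth noting is that the algebraic-independence obstacle you correctly flag is handled in Coppersmith's and Coron's formulations by taking the resultant of the short polynomial $h$ with the irreducible input polynomial itself --- not of two LLL output vectors --- so the irreducibility of the quadratic form already suffices and no extra genericity hypothesis is needed; with that standard fix your sketch is a faithful expansion of the proof the paper leaves implicit.
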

\begin{proof} \ \\ \rm{
This is a direct application of coppersmith's theorem on bivariate polynomials \cite{coppersmith}.
}
\end{proof}
From this first underlying theorem, the condition to recover solutions $(x, y)$ in polynomial time cannot be achieved in practice. Hence, we consider the same theorem in a residual ring $\mathbb{Z}_{M}$ with $M$ a RSA type modulus greater than $n$.

\begin{theorem} \ \\ \rm{
Consider the polynomial $n(x, y)\in\mathbb{Z}[x, y]$ as defined above. Let $M$ be a RSA type modulus greater than $n$, $X$ and $Y$ such that for all $(x, y)$ solutions of $n(x, y)$,   $\vert x \vert <X$ and $\vert y \vert <Y$. If $XY<M^{1/2-\epsilon}$ for some $0<\epsilon<1/2$ then we can compute $n(x, y) \equiv 0\mod M$ and find solutions of $n(x, y)$ over $\mathbb{Z}$.
}
\end{theorem}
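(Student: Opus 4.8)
The plan is to realize this as an instance of Coppersmith's method for modular multivariate polynomials, in the lattice reformulation of Howgrave--Graham. Write $f(x,y)=n(x,y)$ for the bivariate polynomial of the previous theorem, and suppose $(x_0,y_0)$ is the sought root, so that $f(x_0,y_0)\equiv 0 \pmod M$ with $|x_0|<X$ and $|y_0|<Y$. The first step is to fix an integer parameter $k\geq 1$ and form the family of shift polynomials
\[
g_{i,j,\ell}(x,y)=x^{i}y^{j}M^{k-\ell}f(x,y)^{\ell},
\]
indexed so that every $g_{i,j,\ell}$ satisfies $g_{i,j,\ell}(x_0,y_0)\equiv 0 \pmod{M^{k}}$. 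Because $f$ has total degree $d=2$ and the explicit monomial support $\{1,x,y,xy\}$, choosing the shifts up to a prescribed degree yields a finite, explicitly described generating set.

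Second, I would scale the variables by their bounds, replacing each monomial $x^{a}y^{b}$ by $X^{a}Y^{b}$ on the diagonal, and collect the coefficient vectors of the $g_{i,j,\ell}(Xx,Yy)$ as the rows of a matrix $B$; these rows span a lattice $\mathcal{L}$. By Howgrave--Graham's lemma, any polynomial $h$ lying in $\mathcal{L}$ whose coefficient vector has Euclidean norm below $M^{k}/\sqrt{\dim\mathcal{L}}$ satisfies $h(x_0,y_0)=0$ exactly over $\mathbb{Z}$, not merely modulo $M^{k}$. Running LLL on $B$ produces short vectors whose norms are bounded, up to the usual $2^{O(\dim\mathcal{L})}$ factor, by $(\det\mathcal{L})^{1/\dim\mathcal{L}}$, so the scheme succeeds as soon as $\det\mathcal{L}$ is small enough to push these norms below the Howgrave--Graham threshold.

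Third, the quantitative condition is obtained by estimating $\det\mathcal{L}$ as a monomial in $X$, $Y$ and $M$ and comparing its $(\dim\mathcal{L})$-th root against $M^{k}$. For this degree-two polynomial the dominant contributions are powers of $XY$ weighed against powers of $M$, and carrying the bookkeeping to leading order collapses the inequality to $XY<M^{1/2-\epsilon}$, with $\epsilon$ absorbing the lower-order $\dim\mathcal{L}$ terms and the LLL approximation factor; letting $k$ grow allows $\epsilon$ to be taken arbitrarily small. The running time is polynomial in $\log M$ and in $\dim\mathcal{L}$, hence polynomial in the input size for fixed $k$.

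The hard part will be the final extraction. LLL yields two integer polynomials $h_1,h_2$ vanishing at $(x_0,y_0)$, and one would eliminate a variable through $\mathrm{Res}_{y}(h_1,h_2)$, or a Gr\"obner basis for $\langle h_1,h_2\rangle$, to recover $x_0$ and then $y_0$. This is rigorous only when $h_1$ and $h_2$ are algebraically independent; for bivariate \emph{modular} equations that independence is not guaranteed by the lattice construction and is exactly the standard Coppersmith heuristic. I would therefore either state the theorem under that heuristic, or, to make it unconditional, exploit the structured shape $n(x,y)=36xy\pm 6(x\pm y)\pm 1$, where the separable part $6(x\pm y)$ can be used to force a nonzero resultant. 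Pinning down that last point is the genuine obstacle, and the place where the clean bound $XY<M^{1/2-\epsilon}$ must be reconciled with what the lattice actually delivers.
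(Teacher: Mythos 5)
Your proposal is, in substance, the proof the paper omits: the paper's entire argument for this theorem is the single sentence that it is ``a direct application of coppersmith's theorem on bivariate polynomials \cite{coppersmith} on a residual ring,'' so your reconstruction of the underlying machinery --- the shift polynomials $x^{i}y^{j}M^{k-\ell}f(x,y)^{\ell}$, the bound-scaled coefficient lattice, Howgrave--Graham's lemma with threshold $M^{k}/\sqrt{\dim\mathcal{L}}$, LLL, and the determinant bookkeeping that collapses to $XY<M^{1/2-\epsilon}$ as $k$ grows --- is exactly what that citation is meant to stand for, and it is the same approach, merely written out. Where you genuinely diverge from the paper is your final paragraph, and there you are right and the paper is too glib: the rigorous results of \cite{coppersmith} cover univariate \emph{modular} equations and bivariate equations \emph{over the integers}; the bivariate modular case invoked here is not a theorem of that paper but its standard heuristic extension (as in Boneh--Durfee), valid only under the unproven assumption that the two short LLL vectors $h_{1},h_{2}$ are algebraically independent so that $\mathrm{Res}_{y}(h_{1},h_{2})\neq 0$. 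The paper's one-line proof silently inherits this gap; your write-up names it, which makes your version the more honest of the two --- though your proposed repair, exploiting the bilinear shape $36xy\pm 6(x\pm y)\pm 1$ to force a nonzero resultant, is at present a hope rather than an argument, since nothing in the lattice construction prevents $h_{2}$ from being a polynomial multiple of $h_{1}$ even for structured $f$. So: same route as the paper, executed in full detail, and correct provided the statement is read (as it should be, in both your version and the paper's) as conditional on the usual Coppersmith independence heuristic.
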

\begin{proof}
As with the previous theorem, this is a direct application of coppersmith's theorem on bivariate polynomials \cite{coppersmith} on a residual ring.
\end{proof}

%\section{Experimental Results}
%In this section, we apply the methods developped in our contributions. We present here many examples for each method.
%\begin{itemize}
%\item[•] Through matrix decomposition: 
%Given\\ $n=197143905014555437160226315063307250013$.\\
%The next step is to find integers $a$, $b$, $c$, $d$ such that $N=
%\begin{pmatrix}
%a & b \\ 
%c & d
%\end{pmatrix}$ with $\det N=n$.\\
%Note that fixing $a$ and $b$, $c$ and $d$ can be found in polynomial time complexity through the extended ecuclidean algorithm or conversely fixing $c$ and $d$, can be found in the same way.\\
%\item[•] Through Rectangle point of view:
%\item[•] Through factors forms 
%\end{itemize}

\section{Conclusion}
%This paper presents \platform, a remote measurement platform aiming to detect global \dnscensorship. 

In this paper, we have recalled some factorization methods and then approached the problem from completely different angles. First, we have considered the problem from the ring $\displaystyle\left(\mathbb{Z}, \text{+}, \cdot\right)$ to Lebesgue space $\mathcal{L}^{1}\left(X\right)$ where $X$ can be $\mathbb{Q}$ or any given interval setting, in which case, this problem is equivalent to finding the perimeter of a rectangle whose area is known. In this direction, it is equivalent to either finding bounds of integrals or finding primitives for some given bounds. 
Secondly, we have taken the problem from the ring $\displaystyle\left(\mathbb{Z}, \text{+}, \cdot\right) $ to $\left( M_{n}\text{(}\mathbb{Z}\text{)}, \ \text{+} \ \cdot\right)$ and show that this problem is equivalent to matrix decomposition, and therefore present a computing algorithm through a Gröbner basis computation approach and matrix diagonalization.
Finally, we have addressed the problem depending on algebraic forms of factors and show that this problem is equivalent to finding small roots of a bivariate polynomial through coppersmith's method.
The aim of this article is to present the problem from different angles, opening up new directions for approaching the problem in a different way taking advantage of measure, calculus and related theories.\\
As future perspective, for the MAFpv, a future immediate work would be investigating geometrically and finding a possible construction rule on these hyperbolas to recover the sought solutions with respect to the equation coefficients and divisors of $n-1$. In addition, best upper bounds $X$ and $Y$, and $M$ should be investigated for better performances. 
\section{Acknowledgements}
The authors would like to thank the anonymous reviewers for their helpful comments on the paper.. \\ \\

\bibliographystyle{plain}

\end{document}